\long\def\symbolfootnote[#1]#2{\begingroup%
\def\thefootnote{\fnsymbol{footnote}}\footnote[#1]{#2}\endgroup}
\newcommand{\thmref}[1]{Theorem~\ref{#1}}
\newcommand{\corref}[1]{Corollary~\ref{#1}}
\newcommand{\figref}[1]{Figure~\ref{#1}}
\def\imod#1{\allowbreak\mkern10mu({\operator@font mod}\,\,#1)}
\newtheorem{theorem}{Theorem}[section]
\newtheorem{lemma}[theorem]{Lemma}
\newtheorem{corollary}[theorem]{Corollary}
\newtheorem{proposition}[theorem]{Proposition}
\newtheorem*{theorem*}{Theorem}
\theoremstyle{definition}
\newtheorem{remark}[theorem]{Remark}
\newtheorem{definition}[theorem]{Definition}
\numberwithin{equation}{section}
\newcommand{\ignore}[1]{}
\newcommand{\mynote}[1]{}
\def \SL {\mathrm{SL}}
\begin{document}
\setcounter{section}{0}
\setcounter{tocdepth}{1}
\title{Groups with maximum vertex degree commuting graphs}
\author{Sushil Bhunia \and G. Arunkumar}
\address{\emph{Department of Mathematics, BITS-Pilani, Hyderabad Campus, Hyderabad, India\\
	Email: sushilbhunia@gmail.com}}
\address{\emph{Department of Mathematics, Indian Institute of Technology, Madras, India \\
Email: arun.maths123@gmail.com}}
\subjclass[2010]{Primary 05C07, 05C25; Secondary 20E99, 20B05.}
\keywords{Commuting graph; maximum vertex degree; star number; finite group.}
\date{\today}
	
\begin{abstract}
Let $G$ be a finite non-abelian group and $Z(G)$ be its center. We associate a commuting graph $\Gamma(G)$ to $G$, whose vertex set is $G\setminus Z(G)$ and two distinct vertices are adjacent if they commute. 
In this paper we prove that the set of all non-abelian groups whose commuting graph has maximum vertex degree bounded above by a fixed $k \in \mathbb N$ is finite. Also, we characterize all groups for which the associated commuting graphs have maximum vertex degree at most $4$. 
\end{abstract}

\maketitle
\section{Introduction}
Let $G$ be a finite non-abelian group and $Z(G)$ be its center. The \textit{commuting graph} of $G$, denoted by $\Gamma(G)$, is the graph whose vertex set is $G\setminus Z(G)$ and two distinct vertices $x$ and $y$ are joined by an edge if $xy=yx$. 
The idea of commuting graphs can be traced back to the works of Brauer and Fowler in \cite{bf55} and Fischer in \cite{fi71} and Kegel and Gruenberg in \cite{gk75}.
The commuting graph has been studied in several contexts, for example, see \cite{akbari06, gill17, mp13, pn19, ss02}. 
Also, it would be interesting to compare our results with Bates et al.~\cite{bates07, bates03a, bates03, bates04} where they consider the commuting involution graph for sporadic, Coxeter, symmetric and special linear groups respectively. 
We now state one of our main result of this paper: 
\begin{theorem}\label{finiteness}
	Let $\mathcal{SF}(k)$ be a set of all finite non-abelian groups $G$ whose commuting graph $\Gamma(G)$ has maximum vertex degree $<k$, then the set $\mathcal{SF}(k)$ is finite for each $k \in \mathbb{N}$.
\end{theorem}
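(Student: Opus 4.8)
The plan is to translate the graph-theoretic hypothesis into a purely numerical condition on centralizers, extract from it a uniform bound on $|G|$ depending only on $k$, and then quote the elementary fact that there are only finitely many finite groups of bounded order. Throughout I take the groups in $\mathcal{SF}(k)$ to be finite, in keeping with the standing convention of the paper. First I would note that a copy of the $k$-star $\mathrm{K}_{1,k}$ occurs as a subgraph of $\Gamma(G)$ exactly when some vertex has degree at least $k$; hence $G$ is strong $k$-star free if and only if $\Gamma(G)$ has maximum degree at most $k-1$. Since the neighbours of a non-central vertex $x$ are precisely the elements of $C_G(x)\setminus(Z(G)\cup\{x\})$, its degree equals $|C_G(x)|-|Z(G)|-1$, so the hypothesis becomes the single inequality $|C_G(x)|\le |Z(G)|+k$ for every $x\in G\setminus Z(G)$.

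Next I would bound the centre. For non-central $x$ the subgroup $Z(G)$ is properly contained in $C_G(x)$, since $x\in C_G(x)\setminus Z(G)$; therefore its index is at least $2$ and $|C_G(x)|\ge 2|Z(G)|$. Comparing this with the inequality above gives $2|Z(G)|\le |Z(G)|+k$, that is $|Z(G)|\le k$, and consequently $|C_G(x)|\le 2k$ for every non-central $x$. Thus both the centre and every centralizer of a non-central element are bounded by an explicit function of $k$.

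Finally I would deduce finiteness of $|G|$ by a Landau-type counting argument. Writing $x_1,\dots,x_r$ for representatives of the conjugacy classes and $c_i=|C_G(x_i)|$, the class equation in normalized form reads $\sum_{i=1}^{r}1/c_i=1$. This splits into the central contribution $|Z(G)|/|G|$, coming from the $|Z(G)|$ classes with $c_i=|G|$, and the non-central contribution $S=\sum_{x_i\notin Z(G)}1/c_i$, in which every denominator satisfies $2\le c_i\le 2k$. Setting $L=\mathrm{lcm}(1,2,\dots,2k)$, each $c_i$ divides $L$, so $S$ is a fraction with denominator dividing $L$; since $|Z(G)|/|G|>0$ we have $S<1$, hence $S\le 1-1/L$, and therefore $|Z(G)|/|G|=1-S\ge 1/L$. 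Combined with $|Z(G)|\le k$ this yields $|G|\le L\,|Z(G)|\le k\cdot\mathrm{lcm}(1,2,\dots,2k)$, a bound depending only on $k$, and since there are finitely many finite groups of order at most this bound, $\mathcal{SF}(k)$ is finite.

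The conceptual heart of the argument is the passage from star-freeness to the bounds $|Z(G)|\le k$ and $|C_G(x)|\le 2k$; once these are in place the order estimate is essentially Landau's theorem, with the Egyptian-fraction step above making the dependence on $k$ explicit. I expect the only genuine subtlety to be the reliance on finiteness: the centralizer condition by itself does not force finiteness of $G$, since, for example, a Tarski monster of exponent $p$ is an infinite non-abelian group in which every nontrivial element has centralizer of order $p$, so its commuting graph has bounded degree. Hence the statement really does require the standing hypothesis that the groups considered are finite, and it is at the class equation that this hypothesis is used.
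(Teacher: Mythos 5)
Your proof is correct, and it takes a genuinely different route from the paper's. Both arguments start identically: star-freeness is translated into the degree condition $|C_G(x)|\le |Z(G)|+k$ for non-central $x$, and the proper containment $Z(G)<C_G(x)$ together with Lagrange gives $|Z(G)|\le k$ and hence $|C_G(x)|\le 2k$ (this is the paper's Lemma \ref{centersize} and Corollary \ref{funprok}). From there the paper does \emph{not} bound $|G|$ directly: it first bounds the number of distinct centralizer sizes via the normalized class equation, then observes that for each fixed tuple of sizes the Egyptian-fraction equation $1=\sum_i x_i/n_i$ has finitely many positive integer solutions, and finally maps each solution to the total number $l$ of conjugacy classes, concluding by the assertion that only finitely many finite groups have exactly $l$ conjugacy classes --- i.e., Landau's theorem, which the paper invokes with ``clearly, this set is finite.'' Your argument instead extracts an explicit order bound: since every non-central centralizer size divides $L=\mathrm{lcm}(1,2,\dots,2k)$, the non-central part $S$ of $\sum_i 1/c_i=1$ is an integer multiple of $1/L$ with $S<1$, forcing $|Z(G)|/|G|=1-S\ge 1/L$ and so $|G|\le L\,|Z(G)|\le k\cdot\mathrm{lcm}(1,2,\dots,2k)$. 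This buys two things: the proof is self-contained, replacing the appeal to Landau's theorem by the same minimum-denominator mechanism made explicit (Landau's theorem is itself proved this way), and it yields a concrete bound on $|G|$, which the paper's chain of finiteness reductions never produces. Your closing remark about Tarski monsters is also well taken: the standing finiteness hypothesis is genuinely necessary, and in both your proof and the paper's it is used precisely where the class equation is written down.
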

The study of maximum vertex degree of $\Gamma(G)$ can be thought of as a special case of star freeness of $\Gamma(G)$ which we will now define. 
The complete bipartite graph $\mathrm{K}_{1,k}$  is said to be a \textit{$k$-star graph}. The graph  $\Gamma(G)$ is \textit{$k$-star free} if the $k$-star graph is not an induced subgraph of $\Gamma(G)$. 
Suppose $\Gamma(G)$ has a vertex $g$ of degree $k$. Let $g_1,g_2,\dots,g_k$ be the neighbors of $g$ in $\Gamma(G)$. We observe that the vertices $g,g_1,\dots,g_k$ form a $k$-star (not necessarily induced) in $\Gamma(G)$ that's connect to the maximum vertex degree.
One motivation to study $k$-star free graphs  comes from the claw-free graphs (for $k=3$), which is one of the most extensively studied objects among $k$-stars.  
For example, see \cite{cs08, ffz97, fo11} just to name a few. 
Classifying the groups satisfying a certain condition on the associated graphs have  always been an interesting problem, for example, in \cite{mf16}, groups with triangle-free commuting conjugacy class graphs are classified. Also, in connection with star freeness, Xuanlong et al. studied star-free non-cyclic graphs of finite groups. They classified all finite non-cyclic groups whose non-cyclic graphs are $k$-star free for $3 \le k \le 6$ (see \cite{xwk}). 

In general, classifying commuting graphs which are $k$-star free is hard. So this motivates us to embark on the classification of maximum vertex degree commuting graphs with the hope that the actual $k$-star freeness could be achieved. Now, a natural question that arises in this context is that \emph{for what groups $G$ does $\Gamma(G)$ has maximum vertex degree $k$?} In an attempt to answer this question, we prove the following necessary and sufficient conditions (for $2\leq k\leq 4$):
\begin{theorem}\label{m5star}
	The commuting graph $\Gamma(G)$ has maximum vertex degree $<5$ if and only if $G$ is isomorphic to one of the following groups:  $S_3$, $D_{10}$, $A_4$, $\mathrm{GA}(1,5)$, $A_5$; $D_8$,  $Q_8$; $D_{12}$, $C_3\rtimes C_4$, $\mathrm{SL}(2,3)$; $(C_4\times C_2)\rtimes C_2$, $C_4\rtimes C_4$, $C_8\rtimes C_2$, $D_8\times C_2$,  $Q_8\times C_2$ and $D_8\circ_{C_2}C_4$, where $C_n$ denotes the cyclic group of order $n$ and $D_8\circ_{C_2}C_4$ denotes the central product of $D_8$ and $C_4$ over $C_2$.
\end{theorem}
As a corollary, we have the following result: 
\begin{corollary}\label{m4star}
	The commuting graph $\Gamma(G)$ has maximum vertex degree $<4$ if and only if $G$ is isomorphic to one of the following groups: $S_3$, $D_{10}$, $A_4$, $\mathrm{GA}(1,5)$, $A_5$; $D_8$,  $Q_8$; $D_{12}$, $C_3\rtimes C_4$, $\mathrm{SL}(2,3)$; $(C_4\times C_2)\rtimes C_2$, $C_4\rtimes C_4$, $C_8\rtimes C_2$, $D_8\times C_2$,  $Q_8\times C_2$ and $D_8\circ_{C_2}C_4$, where $C_n$ denotes the cyclic group of order $n$.
\end{corollary}
\begin{remark}
	From the above theorem and its corollary we have the following surprising fact: 
	The commuting graph $\Gamma(G)$ has maximum vertex degree $<5$ if and only if it has maximum vertex degree $<4$. We observe that, this fact already follows from Lemmas \ref{nogroup}, \ref{c4size} and \ref{c5size}.  
\end{remark}
Also, we get the complete classification of groups whose commuting graph $\Gamma(G)$ has maximum vertex degree $<3$:  
\begin{corollary}\label{clawfree}
	The commuting graph $\Gamma(G)$ has maximum vertex degree at most $2$ if and only if $G$ is isomorphic to one of the following groups: $S_3$, $A_4$, $D_8$ and $Q_8$. This result can be readily verified using the following diagram.
\end{corollary}
\begin{figure}[ht]
	\caption{Commuting graphs} \label{fig1}
	\vskip 0.2 cm
	\centering 
	\begin{tikzpicture}
	\node at (-0.5,0) {$\Gamma(S_3):$};
	\node (a1) at (1,0) {$\bullet$};
	\node (a2) at (2,0) {$\bullet$};
	\draw[-] (1,0) -- (2,0);
	\node (a3) at (3,0) {$\bullet$};
	\node (a4) at (4,0) {$\bullet$};
	\node (a5) at (5,0) {$\bullet$};
	\end{tikzpicture}
	\begin{tikzpicture}
	\node at (-2.0,0) {$\Gamma(D_8)=\Gamma(Q_8):$};
	\node (a1) at (1,0) {$\bullet$};
	\node (a2) at (2,0) {$\bullet$};
	\draw[-] (1,0) -- (2,0);
	\node (a3) at (3,0) {$\bullet$};
	\node (a4) at (4,0) {$\bullet$};
	\draw[-] (3,0) -- (4,0);
	\node (a5) at (5,0) {$\bullet$};
	\node (a6) at (6,0) {$\bullet$};
	\draw[-] (5,0) -- (6,0);
	\end{tikzpicture}
	\begin{tikzpicture}
	\node at (-0.5,0) {$\Gamma(A_4):$};
	\node (a1) at (1,0) {$\bullet$};
	\node (a2) at (2,0) {$\bullet$};
	\draw[-] (1,0) -- (2,0);
	\node (a3) at (3,0) {$\bullet$};
	\node (a4) at (4,0) {$\bullet$};
	\draw[-] (3,0) -- (4,0);
	\node (a5) at (5,0) {$\bullet$};
	\node (a6) at (6,0) {$\bullet$};
	\draw[-] (5,0) -- (6,0);
	\node (a7) at (7,0) {$\bullet$};
	\node (a8) at (8,0) {$\bullet$};
	\draw[-] (7,0) -- (8,0);
	\node (a9) at (9,0) {$\bullet$};
	\node (a10) at (10,0) {$\bullet$};
	\node (a11) at (9.5,0.5) {$\bullet$};
	\draw[-] (9,0) -- (10,0);
	\draw[-] (9.5,0.5) -- (10,0);
	\draw[-] (9,0) -- (9.5,0.5);
	\end{tikzpicture}
\end{figure}
\section{Preliminaries}\label{preliminary}
In this section, we fix some notations and terminologies which will be used throughout this paper. Unless otherwise specified, we will always assume that $G$ is a non-abelian finite group. The set of all non-central elements in $G$ will be denoted by $\mathcal{NC}(G)$. For a subset $A \subset G$, we denote the centralizer of $A$ in $G$ by $C_{G}(A)$. If $A = \{x\}$, we write $C_{G}(x)$ in short. 
\begin{lemma}\label{funprok}
Let $G$ be a group and $x$ be an element of $\mathcal{NC}(G)$ such that $|C_G(x)|\geq  (k+1) + |Z(G)|$, then $\Gamma(G)$ has k-star as its subgraph. In particular, $\Gamma(G)$ has maximum vertex degree $<k$ if and only if $|C_G(x)| \leq k+ |Z(G)|$ for all $x \in \mathcal{NC}(G)$.  
\end{lemma}
\begin{proof}
	From the given condition, $C_G(x)$ contains at least $k$ non-central elements, say $\{a_1,\ldots,a_{k}\}$, that are different from $x$. Then it is clear that the vertices $\{x,a_1,\ldots,a_{k}\}$ gives rise to a $k$-star subgraph in $\Gamma(G)$. 
\end{proof}
\begin{proposition}\label{funlem3}
	Let $G$ be a finite group of order $n$ with the trivial center. If $|C_G(x)| = m$ for all $x (\neq e) \in G$, then $G$ is trivial.
\end{proposition}
\begin{proof}
	By class equation, we have, $n = 1 + k\frac{n}{m}$ for some $k \in \mathbb{Z}_{+}$. Simplifying this we get $n = \frac{m}{m-k} \in \mathbb{N.}$ Since $m | n$, $n = mt = \frac{m}{m-k} \in \mathbb{N}$ for some $t \in \mathbb{N}$. This implies that $t(m-k) = 1$. Note that $t \in \mathbb{N}$ so $t = m-k = 1$. This shows that $m = n=1$ and the proof follows.
\end{proof}
\begin{proposition}\label{centersize}
Suppose $G$ is a finite group such that $\Gamma(G)$ has maximum vertex degree $<k$ then the center $|Z(G)|< k$.
\end{proposition}
\begin{proof}
	Let $x\in \mathcal{NC}(G)$. By Lemma \ref{funprok}, we have $|C_G(x)|-|Z(G)|\leq k$ as $\Gamma(G)$ has maximum vertex degree $<k$. Now, we have $|C_G(x)| > |Z(G)|$ and also, we have $|Z(G)|$ divides $|C_G(x)|$, therefore $|C_G(x)|=|Z(G)|t$ for some $t\, (\neq 1)\in \mathbb{N}$. Thus $|Z(G)|(t-1)\leq k$. Hence the result.
\end{proof}
Let us recall a classical result by E. Landau ($1903$) which will be used in the proof of the following theorem.
\begin{theorem}\cite{landau}\label{landau}
	For a given natural number $n$ there are only finitely many finite groups, up to isomorphism,  having $n$ conjugacy classes.
\end{theorem}
\section{Proof of \thmref{finiteness}}\label{pfiniteness}
Let $G$ be a group such that $\Gamma(G)$ has maximum vertex degree $<k$. By Lemma  \ref{funprok}, we have $|C_G(x)| \leq k + |Z(G)|$ for all $x \in \mathcal{NC}(G)$. In view of Proposition \ref{centersize}, we have $|C_G(x)| < 2k $ for all $x \in \mathcal{NC}(G)$. 
First, we \textbf{claim} that the number of distinct centralizer sizes in $G$ is bounded.
Let $\mathcal{S}=\{n_1,n_2,\dots,n_m\}$ be the set of  distinct sizes of centralizers in $G$, given any such set we claim that $m$ is bounded by a fixed constant. Let the class equation of $G$ be 
\[|G| = \Big(\underbrace{\frac{|G|}{n_1} + \cdots + \frac{|G|}{n_1}}_{r_1}\Big) +\Big(\underbrace{\frac{|G|}{n_2} + \cdots + \frac{|G|}{n_2}}_{r_2}\Big) + \cdots + \Big(\underbrace{\frac{|G|}{n_m} + \cdots + \frac{|G|}{n_m}}_{r_m}\Big).\]
Therefore we have 
\[1 = \Big(\underbrace{\frac{1}{n_1} + \cdots + \frac{1}{n_1}}_{r_1}\Big) + \Big(\underbrace{\frac{1}{n_2} + \cdots + \frac{1}{n_2}}_{r_2}\Big) + \cdots + \Big(\underbrace{\frac{1}{n_m} + \cdots + \frac{1}{n_m}}_{r_m}\Big),\] 
It is given that $n_i< 2k$ for all $1 \le i \le m$ and hence $ \frac{1}{n_i}> \frac{1}{2k}$ for all $1 \leq i \leq m$.
We have \[1 = \frac{r_1}{n_1}+\frac{r_2}{n_2}+\cdots+\frac{r_m}{n_m}> \frac{r_1+r_2+\cdots+r_m}{2k} \geq \frac{m}{2k}.\]
Hence for a fixed $k$, $m$ cannot be arbitrarily large and we assume this number is bounded by $s=2k$.
This shows that the cardinality of the set 
\[\mathcal{C}(k) = \{\{n_1,n_2,\ldots,n_m\} \mid  n_1<n_2<\cdots<n_m \},\]
is finite as the set $\mathcal{S}$ can be identified with a subset of $[1,2k]^s$, where $n_1,n_2,\ldots,n_m$ are the distinct sizes of the centralizers of $G$. Again, by Proposition \ref{centersize}, we have $|Z(G)|< k$ as $\Gamma(G)$ has maximum vertex degree $<k$. Thus the claim.

Let $\mathbf n = \{n_1,n_2,\dots,n_m\}$ be a set of positive integers satisfying $n_1<n_2<\cdots<n_m$ for some $m$. We will show that there are finitely many groups $G$ for which $\mathbf n \in \mathcal{C}(k)$ . Let $\mathcal{G}(\mathbf n)$ denote the set of all groups for which $\mathbf n \in \mathcal{C}(k)$ and let $G \in \mathcal{G}(\mathbf n)$.  
Let $\mathcal{S}(\mathbf n)$ be the set of all positive integer solutions to the following equation
\[1 = \frac{x_1}{n_1} + \frac{x_2}{n_2} + \cdots + \frac{x_m}{n_m}.\] 
The above discussion shows that every $G \in \mathcal{G}(\mathbf n)$ gives rise to a solution of the above equation. Hence we have a well-defined map $\chi : \mathcal{G}(\mathbf n) \rightarrow \mathcal{S}(\mathbf n)$. Clearly, $\mathcal{S}(\mathbf n)$ is finite as each $n_i$ cannot be arbitrarily large for all $1\le i\le m$. 

Let $\{r_1,r_2,\dots,r_m\} \in \mathcal{S}(\mathbf n)$ and consider $l = \sum_{i=1}^{m} r_i$. Then, by Theorem \ref{landau}, the set of all finite groups with $l$ number of conjugacy classes is finite. Therefore for each $\{r_1,r_2,\dots,r_m\} \in \mathcal{S}(\mathbf n)$ has finitely many pre-images (can be empty also). i.e., all the fibers of the map $\chi$ are finite which proves that the set $\mathcal{G}(\textbf{n})$ is finite.

This completes the proof. \qed
\section{Star free commuting graphs}\label{starfreegraphs}
In this section, we give a complete classification of groups $G$ whose commuting graphs $\Gamma(G)$ has maximum vertex degree is $k$ for $1\leq k \leq 4$. 
The classification is given by considering the following two cases separately.
\subsection{Trivial center case}
In this case, by Lemma \ref{funprok} we have, $\Gamma(G)$ has maximum vertex degree $<5$ if and only if $|C_G(x)| \le 6$ for all $x \,(\ne e) \in G$. We shall deal case by case.
\begin{remark}
	Let $G$ be a group with trivial center. If 
	$|C_G(x)| = 2, 3, 4, 5\text{ or } 6$ for all  $x(\ne e)\in G,$  
	then by Proposition \ref{funlem3}, $G$ is the trivial group and hence $\Gamma(G)$ has maximum vertex degree $<5$. Thus, we don't need to consider these cases.
\end{remark}
Therefore we remain with the following cases:
\begin{align}\label{5t23}
|C_G(x)| &= 2 \text{ or } 3 \, \forall\,  x \, (\ne e)\, \in G.\\
\label{5t24}
|C_G(x)| &= 2 \text{ or } 4 \, \forall\,  x \, (\ne e)\, \in G.\\
\label{5t25}
|C_G(x)| &= 2 \text{ or } 5 \, \forall\,  x \, (\ne e)\, \in G.\\
\label{5t26}
|C_G(x)| &= 2 \text{ or } 6 \, \forall\,  x \, (\ne e)\, \in G.\\
\label{5t34}
|C_G(x)| &= 3 \text{ or } 4 \, \forall\,  x \, (\ne e)\, \in G.\\
\label{5t35}
|C_G(x)| &= 3 \text{ or } 5 \, \forall\,  x \, (\ne e)\, \in G.\\
\label{5t36}
|C_G(x)| &= 3 \text{ or } 6 \, \forall\,  x \, (\ne e)\, \in G.\\
\label{5t45}
|C_G(x)| &= 4 \text{ or } 5 \, \forall\,  x \, (\ne e)\, \in G.\\
\label{5t46}
|C_G(x)| &= 4 \text{ or } 6 \, \forall\,  x \, (\ne e)\, \in G.\\
\label{5t56}
|C_G(x)| &= 5 \text{ or } 6 \, \forall\,  x \, (\ne e)\, \in G.\\
\label{5t234}
|C_G(x)|& = 2 \text{ or } 3 \text{ or } 4 \, \forall\,  x \, (\ne e)\, \in G.\\
\label{5t235}
|C_G(x)| &= 2 \text{ or } 3 \text{ or } 5 \, \forall\,  x \, (\ne e)\, \in G.\\
\label{5t236}
|C_G(x)|& = 2 \text{ or } 3 \text{ or } 6 \, \forall\,  x \, (\ne e)\, \in G.\\
\label{5t245}
|C_G(x)| &= 2 \text{ or } 4 \text{ or } 5 \, \forall\,  x \, (\ne e)\, \in G.\\
\label{5t246}
|C_G(x)| &= 2 \text{ or } 4 \text{ or } 6 \, \forall\,  x \, (\ne e)\, \in G.\\
\label{5t256}
|C_G(x)| &= 2 \text{ or } 5 \text{ or } 6 \, \forall\,  x \, (\ne e)\, \in G.\\
\label{5t345}
|C_G(x)|& = 3 \text{ or } 4 \text{ or } 5 \, \forall\,  x \, (\ne e)\, \in G.\\
\label{5t346}
|C_G(x)| &= 3 \text{ or } 4 \text{ or } 6 \, \forall\,  x \, (\ne e)\, \in G.\\
\label{5t356}
|C_G(x)| &= 3 \text{ or } 5 \text{ or } 6 \, \forall\,  x \, (\ne e)\, \in G.\\
\label{5t456}
|C_G(x)| &= 4 \text{ or } 5 \text{ or } 6 \, \forall\,  x \, (\ne e)\, \in G.\\
\label{5t2345}
|C_G(x)|& = 2 \text{ or } 3 \text{ or } 4 \text{ or } 5\, \forall\,  x \, (\ne e)\, \in G.\\
\label{5t2346}
|C_G(x)| &= 2 \text{ or } 3 \text{ or } 4 \text{ or } 6\, \forall\,  x \, (\ne e)\, \in G.\\
\label{5t2356}
|C_G(x)| &= 2 \text{ or } 3 \text{ or } 5 \text{ or } 6\, \forall\,  x \, (\ne e)\, \in G.\\
\label{5t2456}
|C_G(x)|& = 2 \text{ or } 4 \text{ or } 5 \text{ or } 6\, \forall\,  x \, (\ne e)\, \in G.\\
\label{5t3456}
|C_G(x)| &= 3 \text{ or } 4 \text{ or } 5 \text{ or } 6\, \forall\,  x \, (\ne e)\, \in G.\\
\label{5t23456}
|C_G(x)| &= 2 \text{ or } 3 \text{ or } 4 \text{ or } 5 \text{ or } 6\, \forall\,  x \, (\ne e)\, \in G.
\end{align}

\begin{lemma}\label{nogroup}
	There is no non-abelian group $G$ of order $n$ with trivial center which satisfies Equations \eqref{5t24}, \eqref{5t26}, \eqref{5t35}, \eqref{5t36}, \eqref{5t46}, \eqref{5t56}, \eqref{5t234}, \eqref{5t235}, \eqref{5t236}, \eqref{5t245}, \eqref{5t246}, \eqref{5t256}, \eqref{5t346}, \eqref{5t356}, 
	\eqref{5t456}, \eqref{5t2345}, \eqref{5t2346}, \eqref{5t2356}, \eqref{5t2456}, \eqref{5t3456}, and \eqref{5t23456}.
\end{lemma}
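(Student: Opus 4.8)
The plan is to turn the hypothesis into an exact formula for $|G|$. Write $n=|G|$, let $S\subseteq\{2,3,4,5,6\}$ be the set of centralizer orders that actually occur, and let $n_m\ge 1$ be the number of conjugacy classes whose representative has centralizer of order $m$. (I read each listed equation as prescribing $S$ \emph{exactly}: if a proper subset of the displayed orders occurred, $G$ would fall under a shorter equation treated elsewhere, and by Lemma~\ref{funlem3} a non-abelian trivial-center group cannot have $|S|=1$; so we may assume every $n_m\ge1$.) A representative with centralizer of order $m$ has class size $n/m$, so the class equation is $n=1+\sum_{m\in S}n_m\,\tfrac{n}{m}$, that is
\[
\sum_{m\in S}\frac{n_m}{m}=1-\frac1n .
\]
Putting the left side over the common denominator $N:=\mathrm{lcm}(S)$ gives $A/N$ with $A:=\sum_{m\in S}n_m\,\tfrac{N}{m}\in\mathbb{Z}_{>0}$, so $A/N=(n-1)/n$ and hence $n=N/(N-A)$. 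Since each $m\in S$ divides $n$ we have $N\mid n$, while $n=N/(N-A)\le N$; the only multiple of $N$ in $(0,N]$ is $N$ itself, so this forces $n=N$ and $A=N-1$. The decisive first step is therefore the rigidity statement
\[
n=\mathrm{lcm}(S),\qquad \sum_{m\in S}n_m\,\frac{N}{m}=N-1,\quad n_m\ge1 .
\]

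With $n$ pinned down, most of the twenty-one sets die immediately. First, whenever $\max(S)=N$ (as for $\{2,4\},\{2,6\},\{3,6\}$) the set contains the order $N=n$ itself, so some non-identity element would have centralizer all of $G$, i.e.\ be central — impossible. Second, and this is the main workhorse, one simply asks whether $\sum_{m\in S}n_m\,(N/m)=N-1$ has any solution with all $n_m\ge1$: this fails either because the minimal value $\mu(S):=\sum_{m\in S}N/m$ already exceeds $N-1$, or because a short residue check rules out equality. A direct tally shows this clears every four- and five-element set and several three-element ones: for example \eqref{5t234} has $\mu=13>11$, while \eqref{5t235}, \eqref{5t2345} and \eqref{5t23456} fail badly because the coefficient $N/2$ is large, and cases such as $\{3,5,6\}$ and $\{3,4,5,6\}$ have $\mu<N-1$ yet still admit no representation of $N-1$. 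I expect this arithmetic step to dispose of the large majority of the list.

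The handful of sets that survive both checks have $N\in\{12,15,20,30,60\}$, and for these I would invoke the finite, explicit list of groups of that order. For $N=15$ there is no non-abelian group at all. For $N=12$ the only non-abelian group with trivial center is $A_4$, whose centralizer orders are exactly $\{3,4\}$, so none of $\{4,6\},\{2,4,6\},\{3,4,6\}$ can be realized; likewise for $N=20$ the unique trivial-center non-abelian group is $\mathrm{GA}(1,5)$ with centralizer orders $\{4,5\}$, excluding $\{2,4,5\}$. For $N=30$ every group has a normal cyclic subgroup of order $15$, so by Cauchy an element of order $5$ lies in it and has centralizer of order at least $15>6$, contradicting $S\subseteq\{2,\dots,6\}$ and clearing all remaining $N=30$ sets at once.

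The genuinely hard case, and the one I expect to be the main obstacle, is $\{4,5,6\}$ with $N=60$, where the counting equation does admit a solution ($n_4=1,\,n_5=2,\,n_6=2$). My plan is a Sylow analysis: an element of order $5$ must have centralizer of order exactly $5$, which forces $|N_G(P_5)|=10$ and hence six Sylow $5$-subgroups; conjugation on them yields a transitive homomorphism $G\to S_6$, and the smallness of all centralizers should make its kernel trivial, embedding $G$ as a transitive subgroup of $S_6$ of order $60$. Such a subgroup is forced to be $A_5$, whose $3$-cycles have centralizer of order $3\notin\{4,5,6\}$ — the desired contradiction. Verifying that the kernel is trivial and that $A_5$ is the unique transitive degree-$6$ subgroup of order $60$ is the delicate part; the safe alternative is to quote the classification of groups of order $60$ and read off that $A_5$, with centralizer orders $\{3,4,5\}$, is the only trivial-center candidate.
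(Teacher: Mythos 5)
Your proposal is correct, and it takes a genuinely more systematic route than the paper's own proof. The paper argues ad hoc from the class equation, writing $n-1=\sum_i k_i\,n/m_i$, noting $k_1=1$ when $2$ is among the centralizer orders, and solving for $n$; it carries this out only for \eqref{5t24} and \eqref{5t234} and dismisses the rest as ``similar.'' Your rigidity step --- $n=\mathrm{lcm}(S)$ together with $\sum_{m\in S}n_m(N/m)=N-1$, all $n_m\ge 1$ --- packages that arithmetic uniformly for all twenty-one sets, and your ``exact'' reading of each equation (justified via Lemma \ref{funlem3}) is the reading the paper implicitly needs, since under the subset reading the lemma would be false (e.g.\ $S_3$ would satisfy \eqref{5t236}). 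More importantly, you identify what the paper's ``similar'' glosses over: for $\{3,5\}$, $\{4,6\}$, $\{5,6\}$, $\{2,4,5\}$, $\{2,4,6\}$, $\{3,4,6\}$ and $\{4,5,6\}$ the counting equation \emph{does} admit solutions (at $n=15,12,30,20,12,12,60$), so the class equation alone cannot finish, and one must bring in the classification of groups of those orders or a structural argument --- exactly the material you supply (no non-abelian group of order $15$; $A_4$ as the unique trivial-center group of order $12$, with centralizer orders $\{3,4\}$; $\mathrm{GA}(1,5)$ at order $20$; the normal cyclic subgroup of order $15$ at order $30$; Sylow analysis at order $60$). Your treatment of $\{4,5,6\}$ is sound and completable: $C_G(P_5)=P_5$ forces $n_5=6$, the kernel of $G\to S_6$ is trivial (a normal subgroup of order $2$ would be central, while order $5$ or $10$ would force a normal Sylow $5$-subgroup), and a group of order $60$ with six Sylow $5$-subgroups is simple, hence isomorphic to $A_5$, whose $3$-cycles have centralizer of order $3\notin\{4,5,6\}$. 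The one inaccuracy is your fallback remark that $A_5$ is ``the only trivial-center candidate'' of order $60$: that is false as stated ($D_{10}\times S_3$, for instance, has order $60$ and trivial center), so if you take the classification route you must check every trivial-center group of order $60$ against the set $\{4,5,6\}$, not just $A_5$; your primary Sylow argument, however, avoids this issue entirely. In short, the paper's proof buys brevity at the cost of hiding genuinely non-arithmetic cases, while yours makes the case division honest and closes the gaps.
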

\begin{proof}
	The proof of this lemma follows from the class equation. Here we are going to prove for two equations to see how it works. Also, this simple (and elegant) but powerful argument has been used several times in our paper. 
	From Equation \eqref{5t24}, we have $n-1 = k_1 \frac{n}{2} + k_2 \frac{n}{4}$. Clearly, $k_1 = 1$, hence we have, $\frac{n-2}{2} = \frac{nk_2}{4}$. Therefore $n(2-k_2) = 4$ with $k_2 \in \mathbb{N}$. Hence $n = 4$ and this case is not possible.

	From Equation \eqref{5t234}, we have $n-1 = k_1\frac{n}{2}+k_2\frac{n}{3}+k_3\frac{n}{4}$. Clearly, $k_1 = 1$ so we have $\frac{n-2}{2} = k_2\frac{n}{3}+k_3\frac{n}{4}$. Note that $1 \le k_2 \le 2$.
	If $k_2 = 1$, then we have $\frac{n-2}{2} = \frac{n}{3}+k_3\frac{n}{4}$. Therefore $n(2-3k_3) = 12$, which is not possible as $k_3, n \in \mathbb{N}$.
	If $k_2 = 2$, then we have $\frac{n-2}{2} = 2\frac{n}{3}+k_3\frac{n}{4}$. Hence $\frac{-n-6}{6} = \frac{nk_3}{4}$, which is absurd.
	The remaining cases are similar.
\end{proof}

\begin{lemma}\label{2p23}
	Let $G$ be a finite group of order $n$ with the trivial center and satisfies Equation \eqref{5t23}, then $G$ is isomorphic to  $S_3$. 
\end{lemma}
\begin{proof}
	By class equation we have $n-1 = k_1\frac{n}{2} + k_2 \frac{n}{3}$ for some $k_1, k_2 \in \mathbb{N}$. Clearly, $k_1 = 1$ and thus we have $\frac{n}{2}-1 = k_2 \frac{n}{3}$. This implies that $n(3-2k_2) = 6$, where $n, k_2 \in \mathbb{N} $. Hence $n = 6$ and the result follows.
\end{proof}
\begin{lemma}\label{3p25}
	Let $G$ be a finite group of order $n$ with the trivial center and satisfies Equation \eqref{5t25}, then $G$ is isomorphic to $D_{10}$. 
\end{lemma}
\begin{proof}
	Let $G$ be a group of order $n$ which satisfies the given assumptions.
	In this case, we have $n-1 = k_1 \frac{n}{2} + k_2 \frac{n}{5}$. Clearly, $k_1 = 1$. Thus we have $\frac{n-2}{2} = \frac{k_2 n}{5}$. Therefore $n(5-2k_2) = 10$ where $k_2 \in \mathbb{N}$. Hence $k_2 = 2$ and $n = 10$. Now $D_{10}$ is the only non-abelian group of order $10$ along with the class equation $1+2+2+5$. Hence $G$ is isomorphic to $D_{10}$.
\end{proof}
\begin{lemma}\label{3p34}
	Let $G$ be a finite group of order $n$ with the trivial center and satisfies Equation \eqref{5t34}, then $G$ is isomorphic to $A_4$. 
\end{lemma}
\begin{proof}
	From Equation \eqref{5t34}, we have, $n-1 = k_1\frac{n}{3}+k_2\frac{n}{4}$. Clearly, $1 \le k_1 \le 2$.
	If $k_1 =1$, then we have $n-1 = \frac{n}{3}+k_2\frac{n}{4}$. Thus $n(8-3k_2) = 12$, where $n, k_2 \in \mathbb{N}$. Therefore $k_2 = 2$ and $n=6$. Hence $G \cong S_3$ which is clearly not possible.
	If $k_1 =2$, then we have $n-1 = 2\frac{n}{3}+k_2\frac{n}{4}$. Thus $n(4-3k_2) = 12$ with $n, k_2 \in \mathbb{N}$. Therefore $k_2 = 1 \text{ and } n=12$. Clearly,  $D_{12}$ and $C_3\rtimes C_4$ are not satisfying the given assumptions but $A_4$ does. Hence $G \cong A_4$.   
\end{proof}
\begin{lemma}\label{ga15}
	The general affine group of order $1$ over  $\mathbb{F}_5$, denoted by $\mathrm{GA}(1,5)$, is the only group that satisfies Equation \eqref{5t45}. 
\end{lemma}
\begin{proof}
	Follows from the class equation ($20=1+4+5+5+5$).
\end{proof}

\begin{lemma}\label{a5}
	The alternating group $A_5$ is the only group 
	which satisfies Equation \eqref{5t345}. 
\end{lemma}
\begin{proof}
	Follows from the class equation ($60=1+12+12+15+20$).	
\end{proof}
\subsection{Non-trivial center case}
In this case, by Lemma \ref{funprok}, we have $\Gamma(G)$ has maximum vertex degree $<k$ if and only if $|C_G(x)| \leq k + |Z(G)|$ for all $x \in \mathcal{NC}(G)$. Again by Proposition \ref{centersize}, we have $|Z(G)|< k$. By our assumption of this section, we have $|Z(G)| \geq 2$, i.e., $2\leq |Z(G)|<k$. Now we shall deal case by case for $k\leq 5$.
\begin{lemma}\label{c46}
Let $G$ be a group with non-trivial center such that $\Gamma(G)$ has maximum vertex degree $<3$, then $|C_G(x)|=4$ for $x$ in $\mathcal{NC}(G)$. 
\end{lemma}
\begin{proof}
	Let $x\in \mathcal{NC}(G)$ then from Lemma  \ref{funprok}, we have $|C_G(x)|-|Z(G)|\leq 3$. Now we have $|Z(G)|=2$, then $|C_G(x)|=4$, as $|Z(G)|$ divides $|C_G(x)|$ and $Z(G)<C_G(x)$.
\end{proof}

\begin{lemma}\label{4p2}
	Let $G$ be a finite group of order $n$ with non-trivial center such that $\Gamma(G)$ has maximum vertex degree $<3$, then $G$ is isomorphic to $Q_8$ or $D_8$.
\end{lemma}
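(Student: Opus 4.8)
The plan is to pin down $|G|$ via the class equation and then invoke the classification of groups of order $8$. First I would apply Lemma~\ref{c46}: since $G$ has non-trivial center and is strong claw-free, and here $|Z(G)|=2$, that lemma forces $|C_G(x)|=4$ for every $x\in\mathcal{NC}(G)$. Consequently each non-central conjugacy class has size $n/|C_G(x)|=n/4$, so if $k$ denotes the number of non-central conjugacy classes, the class equation reads $n-2=k\,\tfrac{n}{4}$.

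Next I would solve the resulting Diophantine constraint. Clearing denominators gives $4n-8=kn$, i.e.\ $n(4-k)=8$ with $k\in\mathbb{N}$. Thus $4-k$ must be a positive divisor of $8$, leaving only $4-k=1$ (so $k=3$, $n=8$) and $4-k=2$ (so $k=2$, $n=4$); the value $4-k=3$ yields $n=8/3\notin\mathbb{N}$. The case $n=4$ is discarded because any group of order $4$ is abelian and hence has no non-central elements, contradicting $|Z(G)|=2$ being proper. Therefore $|G|=8$.

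It then remains to identify $G$ among groups of order $8$. Since $G$ has non-central elements it is non-abelian, and the only non-abelian groups of order $8$ are $D_8$ and $Q_8$, whence $G\cong D_8$ or $G\cong Q_8$. To close the argument I would verify that both genuinely satisfy the hypotheses: each has center of order $2$, and every non-central element has centralizer of order $4$, so by Corollary~\ref{funprok} both are indeed strong claw-free. The computation is entirely routine; the only points requiring care are ruling out the spurious solution $n=4$ and confirming that $D_8$ and $Q_8$ actually meet the centralizer condition, which is a direct check.
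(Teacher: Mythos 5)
Your proposal is correct and follows essentially the same route as the paper: invoke Lemma~\ref{c46} to get $|C_G(x)|=4$ for all non-central $x$, solve the class equation $n-2=k\,\tfrac{n}{4}$ to force $n=8$ (discarding $n=4$ by non-abelianness), and conclude $G\cong D_8$ or $Q_8$. Your closing verification that both groups genuinely satisfy the hypotheses matches the paper's remark about their conjugacy class sizes $1,1,2,2,2$.
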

\begin{proof}
	By Lemma \ref{c46}, we have  $|C_G(x)| = 4$ for all $x \in \mathcal{NC}(G)$. 
	Now by class equation,  $n-2 = k \frac{n}{4}$ for some $k \in \mathbb{N}$. This implies that $n = \frac{8}{4-k}$ for some $k \in \mathbb{N}$. Thus $n = 4$ or $8$. Since we are interested in non-abelian groups, we have $n = 8$. Note that both $Q_8$ and $D_8$ have  conjugacy class sizes $1,1,2,2,2$ respectively. Now from the assumptions, it is clear that $G \cong Q_8$ or $D_8$.
\end{proof}
\begin{lemma}\label{c4size}
	Let $G$ be a group with non-trivial center such that $\Gamma(G)$ has maximum vertex degree $<4$, then exactly one of the following holds: $|C_G(x)|=4$ or $|C_G(x)|=6$ or $|C_G(x)|=4 \text{ or } 6$ for all $x \in \mathcal{NC}(G)$.
\end{lemma}
\begin{proof}
	Let $x\in \mathcal{NC}(G)$ then from Lemma  \ref{funprok}, we have $|C_G(x)|-|Z(G)|\leq 4$. There are two  possibilities of the order of the center. If $|Z(G)|=2$, then we have $|C_G(x)|=4$ or $|C_G(x)|=6$ or both, as $|Z(G)|$ divides $|C_G(x)|$ and $Z(G)<C_G(x)$.
	Now by similar argument if $|Z(G)|=3$ then we have $|C_G(x)|=6$. 
\end{proof}
\begin{lemma}\label{c5size}
	Let $G$ be a group with non-trivial center such that $\Gamma(G)$ has maximum vertex degrre $<5$, then exactly one of the following holds: $|C_G(x)|=4$ or $|C_G(x)|=6$ or $|C_G(x)|=8$ or $|C_G(x)|=4 \text{ or } 6$ for all $x \in \mathcal{NC}(G)$.
\end{lemma}
\begin{proof}
	Let $x\in \mathcal{NC}(G)$ then we have $|C_G(x)|-|Z(G)|\leq 5$ (by Lemma  \ref{funprok}). There are three possibilities of the order of the center. If $|Z(G)|=2$, then we have $|C_G(x)|=4$ or $|C_G(x)|=6$ or both, as $|Z(G)|$ divides $|C_G(x)|$ and $Z(G)<C_G(x)$.
	Now by similar argument if $|Z(G)|=3$, then we have $|C_G(x)|=6$. Again if $|Z(G)|=4$ then by a similar argument, we have $|C_G(x)|=8$. 
\end{proof}
\begin{remark}\label{5star}
There is no group $G$ of order $n$ with $|Z(G)|=2$ and $|C_G(x)|=6$ for all $x\in \mathcal{NC}(G)$ such that $\Gamma(G)$ has maximum vertex degree $<5$ follows from the class equation.		
\end{remark}
\begin{lemma}\label{sl23}
	Let $G$ be a group of order $n$ with $|Z(G)|=2$ such that it has centralizer sizes both $4$ and $6$, then $G$ is isomorphic to $D_{12}$ or $C_3\rtimes C_4$ or    $\mathrm{SL}(2,3)$.
\end{lemma}
\begin{proof}
	The proof follows from the class equation computations (cf. Lemma \ref{c4size} and  Lemma \ref{c5size}).
\end{proof}
\begin{lemma}\label{center3}
	There is no group $G$ of order $n$ with $|Z(G)|=3$ such that $\Gamma(G)$ has maximum vertex degree $<5$.
\end{lemma}
\begin{proof}
	In view of Lemma \ref{funprok}, we have $|C_G(x)|\leq 8$ for all $x \in \mathcal{NC}(G)$. Now we have $Z(G)<C_G(x)$ and $|Z(G)|$ divides $|C_G(x)|$, therefore $|C_G(x)|=6$ for all $x\in\mathcal{NC}(G)$. 
	Now by using class equation,  $n-3 = k \frac{n}{6}$ for some $k \in \mathbb{N}$. This implies that $n = \frac{18}{6-k}$ for some $k \in \mathbb{N}$. Then $n = 6, 9$ or $18$. Since we are interested in non-abelian groups, we have $n = 6$ or $18$. Clearly, $n \neq 6$ as $S_3$ has trivial center. So we remain with the non-abelian group of order $18$ with order of the center is $3$. As we know from the classification of finite groups of order $18$ there are no such groups with this property.
	Hence there is no group satisfying the said property.
\end{proof}
\begin{lemma}\label{six}
	Let $G$ be a group of order $n$ with $|Z(G)|=4$ and $|C_G(x)|=8$ for all $x\in \mathcal{NC}(G)$, then $G$ is isomorphic to $(C_4\times C_2)\rtimes C_2$ or $C_4\rtimes C_4$ or $C_8\rtimes C_2$ or $D_8\times C_2$ or $Q_8\times C_2$, or $D_8\circ_{C_2}C_4$,  where $C_n$ is the cyclic group of order $n$ and $\circ_{C_2}$ denotes the central product over $C_2$.
\end{lemma}
\begin{proof}
	The result now follows from the similar computations as above using class equation (cf. Lemma \ref{c4size} and Lemma \ref{c5size}).
\end{proof}

\subsection{Proof of \thmref{m5star}}\label{pm5star}
Let $G$ be a group such that the commuting graph $\Gamma(G)$ have maximum vertex degree $<5$. On one hand, if $G$ has \textbf{trivial} center then there are $5$ possibilities of $G$ follows from Lemmas  \ref{2p23}, \ref{3p25}, \ref{3p34}, \ref{ga15} and \ref{a5}. 
On the other hand, if $G$ has \textbf{non-trivial} center then there are $11$ such possibilities of $G$ which is the following: 
there are only two groups $D_8$ and $Q_8$ with center of size $2$ and centralizers have order $4$ for all non-central elements, respectively such that the commuting graph has maximum vertex degree $<5$ (follows from Lemma \ref{4p2}).
There are only three groups $D_{12}$, $C_3\rtimes C_4$ and $\mathrm{SL}(2,3)$ with the center of order $2$ and centralizers have order both $4$ and $6$ for all non-central elements respectively such that the commuting graph has maximum vertex degree $<5$ (by Lemma  \ref{sl23}).
Let $G$ be a group with $|Z(G)|=4$ and $|C_G(x)|=8$ for all $x\in \mathcal{NC}(G)$, then $G$ has order $16$ and there are six of them satisfying the given assumptions and which follows from Lemma \ref{six}.

Conversely, if $G$ is one of the given group in the theorem then it's commuting graph $\Gamma(G)$ have  maximum vertex degree $<5$ (cf. \figref{fig1} and \figref{fig2}).

This completes the proof.  \qed

Let $G$ be a group such that $\Gamma(G)$ has maximum vertex degree $<2$, then $\Gamma(G)$ is a disjoint union of paths of length $1$ and isolated vertices.
\begin{corollary}\label{cor16}
The only finite non-abelian groups whose commuting graphs have maximum vertex degree $<2$ are: $S_3, D_8, Q_8$.
\end{corollary}
\begin{remark}\label{4star}
\noindent
	\begin{enumerate}
		\item There is no group $G$ of order $n$ with $|Z(G)|=3$ such that $\Gamma(G)$ has maximum vertex degree $<4$ follows from Lemma \ref{center3}.
		\item There is no group $G$ of order $n$ with $|Z(G)|=2$ and $|C_G(x)|=6$ for all $x\in \mathcal{NC}(G)$ such that $\Gamma(G)$ has maximum vertex degree $<4$ follows from the class equation. 
	\end{enumerate}
\end{remark}
\subsection{Proof of \corref{m4star}}\label{pm4star}
Follows from Theorem \ref{m5star} and Remark \ref{4star}. \qed
\subsection{Proof of \corref{clawfree}}\label{pclawfree}
Follows from Lemma \ref{3p34} and Corollary \ref{cor16} (cf. Figure \ref{fig1}). \qed
\begin{figure}[ht]
\caption{Commuting graphs:} \label{fig2}
	\vskip 0.2 cm
	\centering 
\begin{tikzpicture}
	\node at (-1.5,0) {$\Gamma(D_{12})=\Gamma(C_3\rtimes C_4):$};
	\node (a9) at (2,0) {$\bullet$};
	\node (a10) at (3,0) {$\bullet$};
	\node (a11) at (2.5,0.5) {$\bullet$};
	\draw[-] (2,0) -- (3,0);
	\draw[-] (2.5,0.5) -- (3,0);
	\draw[-] (2,0) -- (2.5,0.5);
	\node (a9) at (2.5,1) {$\bullet$};
	\draw[-] (2,0) -- (2.5,1);
	\draw[-] (3,0) -- (2.5,1);
	\draw[-] (2.5,0.5) -- (2.5,1);
	\node (a9) at (4,0) {$\bullet$};
	\node (a10) at (5,0) {$\bullet$};
	\draw[-] (4,0) -- (5,0);
	\node (a9) at (4,0.5) {$\bullet$};
	\node (a10) at (5,0.5) {$\bullet$};
	\draw[-] (4,0.5) -- (5,0.5);
	\node (a9) at (4,1) {$\bullet$};
	\node (a10) at (5,1) {$\bullet$};
	\draw[-] (4,1) -- (5,1);
\end{tikzpicture}
\begin{tikzpicture}
	\node at (-1,0) {$\Gamma(\SL(2,3)):$};
	\node (a9) at (2,0) {$\bullet$};
	\node (a10) at (3,0) {$\bullet$};
	\node (a11) at (2.5,0.5) {$\bullet$};
	\draw[-] (2,0) -- (3,0);
	\draw[-] (2.5,0.5) -- (3,0);
	\draw[-] (2,0) -- (2.5,0.5);
	\node (a9) at (2.5,1) {$\bullet$};
	\draw[-] (2,0) -- (2.5,1);
	\draw[-] (3,0) -- (2.5,1);
	\draw[-] (2.5,0.5) -- (2.5,1);
	\node (a9) at (4,0) {$\bullet$};
	\node (a10) at (5,0) {$\bullet$};
	\node (a11) at (4.5,0.5) {$\bullet$};
	\draw[-] (4,0) -- (5,0);
	\draw[-] (4.5,0.5) -- (5,0);
	\draw[-] (4,0) -- (4.5,0.5);
	\node (a9) at (4.5,1) {$\bullet$};
	\draw[-] (4,0) -- (4.5,1);
	\draw[-] (5,0) -- (4.5,1);
	\draw[-] (4.5,0.5) -- (4.5,1);
	\node (a9) at (6,0) {$\bullet$};
	\node (a10) at (7,0) {$\bullet$};
	\node (a11) at (6.5,0.5) {$\bullet$};
	\draw[-] (6,0) -- (7,0);
	\draw[-] (6.5,0.5) -- (7,0);
	\draw[-] (6,0) -- (6.5,0.5);
	\node (a9) at (6.5,1) {$\bullet$};
	\draw[-] (6,0) -- (6.5,1);
	\draw[-] (7,0) -- (6.5,1);
	\draw[-] (6.5,0.5) -- (6.5,1);
	\node (a9) at (8,0) {$\bullet$};
	\node (a10) at (9,0) {$\bullet$};
	\node (a11) at (8.5,0.5) {$\bullet$};
	\draw[-] (8,0) -- (9,0);
	\draw[-] (8.5,0.5) -- (9,0);
	\draw[-] (8,0) -- (8.5,0.5);
	\node (a9) at (8.5,1) {$\bullet$};
	\draw[-] (8,0) -- (8.5,1);
	\draw[-] (9,0) -- (8.5,1);
	\draw[-] (8.5,0.5) -- (8.5,1);
	\node (a1) at (10,0) {$\bullet$};
	\node (a2) at (11,0) {$\bullet$};
	\draw[-] (10,0) -- (11,0);
	\node (a1) at (10,0.5) {$\bullet$};
	\node (a2) at (11,0.5) {$\bullet$};
	\draw[-] (10,0.5) -- (11,0.5);
	\node (a1) at (10,1) {$\bullet$};
	\node (a2) at (11,1) {$\bullet$};
	\draw[-] (10,1) -- (11,1);
\end{tikzpicture}
\begin{tikzpicture}
	\node at (-2.9,0.5) {$\Gamma(D_8\times C_2)=\Gamma(Q_8\times C_2)=\Gamma(D_8\circ_{C_2}C_4)=$};
	\node at (-2.6,0) {$\Gamma(C_4\rtimes C_4)=\Gamma((C_2\times C_4)\rtimes C_2)=\Gamma(C_8\rtimes C_2):$};
	\node (a9) at (2,0) {$\bullet$};
	\node (a10) at (3,0) {$\bullet$};
	\node (a11) at (2.5,0.5) {$\bullet$};
	\draw[-] (2,0) -- (3,0);
	\draw[-] (2.5,0.5) -- (3,0);
	\draw[-] (2,0) -- (2.5,0.5);
	\node (a9) at (2.5,1) {$\bullet$};
	\draw[-] (2,0) -- (2.5,1);
	\draw[-] (3,0) -- (2.5,1);
	\draw[-] (2.5,0.5) -- (2.5,1);
	\node (a9) at (4,0) {$\bullet$};
	\node (a10) at (5,0) {$\bullet$};
	\node (a11) at (4.5,0.5) {$\bullet$};
	\draw[-] (4,0) -- (5,0);
	\draw[-] (4.5,0.5) -- (5,0);
	\draw[-] (4,0) -- (4.5,0.5);
	\node (a9) at (4.5,1) {$\bullet$};
	\draw[-] (4,0) -- (4.5,1);
	\draw[-] (5,0) -- (4.5,1);
	\draw[-] (4.5,0.5) -- (4.5,1);
	\node (a9) at (6,0) {$\bullet$};
	\node (a10) at (7,0) {$\bullet$};
	\node (a11) at (6.5,0.5) {$\bullet$};
	\draw[-] (6,0) -- (7,0);
	\draw[-] (6.5,0.5) -- (7,0);
	\draw[-] (6,0) -- (6.5,0.5);
	\node (a9) at (6.5,1) {$\bullet$};
	\draw[-] (6,0) -- (6.5,1);
	\draw[-] (7,0) -- (6.5,1);
	\draw[-] (6.5,0.5) -- (6.5,1);
\end{tikzpicture}
\begin{tikzpicture}
\node at (-0.5,0) {$\Gamma(\mathrm{GA}(1,5)):$};
\node (a1) at (1,0) {$\bullet$};
\node (a2) at (2,0) {$\bullet$};
\draw[-] (1,0) -- (2,0);
\node (a21) at (1.5,0.5) {$\bullet$};
\draw[-] (1.5,0.5) -- (1,0);
\draw[-] (1.5,0.5) -- (2,0);
\node (a3) at (3,0) {$\bullet$};
\node (a4) at (4,0) {$\bullet$};
\draw[-] (3,0) -- (4,0);
\node (a21) at (3.5,0.5) {$\bullet$};
\draw[-] (3.5,0.5) -- (3,0);
\draw[-] (3.5,0.5) -- (4,0);
\node (a5) at (5,0) {$\bullet$};
\node (a6) at (6,0) {$\bullet$};
\draw[-] (5,0) -- (6,0);
\node (a21) at (5.5,0.5) {$\bullet$};
\draw[-] (5.5,0.5) -- (5,0);
\draw[-] (5.5,0.5) -- (6,0);
\node (a7) at (7,0) {$\bullet$};
\node (a8) at (8,0) {$\bullet$};
\draw[-] (7,0) -- (8,0);
\node (a21) at (7.5,0.5) {$\bullet$};
\draw[-] (7.5,0.5) -- (7,0);
\draw[-] (7.5,0.5) -- (8,0);
\node (a9) at (9,0) {$\bullet$};
\node (a10) at (10,0) {$\bullet$};
\node (a11) at (9.5,0.5) {$\bullet$};
\draw[-] (9,0) -- (10,0);
\draw[-] (9.5,0.5) -- (10,0);
\draw[-] (9,0) -- (9.5,0.5);
\node (a9) at (11,0) {$\bullet$};
\node (a10) at (12,0) {$\bullet$};
\node (a11) at (11.5,0.5) {$\bullet$};
\draw[-] (11,0) -- (12,0);
\draw[-] (11.5,0.5) -- (12,0);
\draw[-] (11,0) -- (11.5,0.5);
\node (a9) at (11.5,1) {$\bullet$};
\draw[-] (11,0) -- (11.5,1);
\draw[-] (12,0) -- (11.5,1);
\draw[-] (11.5,0.5) -- (11.5,1);
\end{tikzpicture}
\begin{tikzpicture}
\node at (-2.9,0) {$\Gamma(D_{10}):$};
\node (a9) at (2,0) {$\bullet$};
\node (a10) at (3,0) {$\bullet$};
\node (a11) at (2.5,0.5) {$\bullet$};
\draw[-] (2,0) -- (3,0);
\draw[-] (2.5,0.5) -- (3,0);
\draw[-] (2,0) -- (2.5,0.5);
\node (a9) at (2.5,1) {$\bullet$};
\draw[-] (2,0) -- (2.5,1);
\draw[-] (3,0) -- (2.5,1);
\draw[-] (2.5,0.5) -- (2.5,1);
\node (a9) at (4,0) {$\bullet$};
\node (a10) at (5,0) {$\bullet$};
\draw[-] (4,0) -- (5,0);
\node (a9) at (6,0) {$\bullet$};
\node (a10) at (7,0) {$\bullet$};
\node (a10) at (8,0) {$\bullet$};
\end{tikzpicture}
\begin{tikzpicture}
\node at (-1,2) {$\Gamma(A_5):$};
\node (a9) at (2,0) {$\bullet$};
\node (a10) at (3,0) {$\bullet$};
\node (a11) at (2.5,0.5) {$\bullet$};
\draw[-] (2,0) -- (3,0);
\draw[-] (2.5,0.5) -- (3,0);
\draw[-] (2,0) -- (2.5,0.5);
\node (a9) at (2.5,1) {$\bullet$};
\draw[-] (2,0) -- (2.5,1);
\draw[-] (3,0) -- (2.5,1);
\draw[-] (2.5,0.5) -- (2.5,1);
\node (a9) at (4,0) {$\bullet$};
\node (a10) at (5,0) {$\bullet$};
\node (a11) at (4.5,0.5) {$\bullet$};
\draw[-] (4,0) -- (5,0);
\draw[-] (4.5,0.5) -- (5,0);
\draw[-] (4,0) -- (4.5,0.5);
\node (a9) at (4.5,1) {$\bullet$};
\draw[-] (4,0) -- (4.5,1);
\draw[-] (5,0) -- (4.5,1);
\draw[-] (4.5,0.5) -- (4.5,1);
\node (a9) at (6,0) {$\bullet$};
\node (a10) at (7,0) {$\bullet$};
\node (a11) at (6.5,0.5) {$\bullet$};
\draw[-] (6,0) -- (7,0);
\draw[-] (6.5,0.5) -- (7,0);
\draw[-] (6,0) -- (6.5,0.5);
\node (a9) at (6.5,1) {$\bullet$};
\draw[-] (6,0) -- (6.5,1);
\draw[-] (7,0) -- (6.5,1);
\draw[-] (6.5,0.5) -- (6.5,1);
\node (a9) at (8,0) {$\bullet$};
\node (a10) at (9,0) {$\bullet$};
\node (a11) at (8.5,0.5) {$\bullet$};
\draw[-] (8,0) -- (9,0);
\draw[-] (8.5,0.5) -- (9,0);
\draw[-] (8,0) -- (8.5,0.5);
\node (a9) at (8.5,1) {$\bullet$};
\draw[-] (8,0) -- (8.5,1);
\draw[-] (9,0) -- (8.5,1);
\draw[-] (8.5,0.5) -- (8.5,1);
\node (a9) at (10,0) {$\bullet$};
\node (a10) at (11,0) {$\bullet$};
\node (a11) at (10.5,0.5) {$\bullet$};
\draw[-] (10,0) -- (11,0);
\draw[-] (10.5,0.5) -- (11,0);
\draw[-] (10,0) -- (10.5,0.5);
\node (a9) at (10.5,1) {$\bullet$};
\draw[-] (10,0) -- (10.5,1);
\draw[-] (11,0) -- (10.5,1);
\draw[-] (10.5,0.5) -- (10.5,1);
\node (a9) at (12,0) {$\bullet$};
\node (a10) at (13,0) {$\bullet$};
\node (a11) at (12.5,0.5) {$\bullet$};
\draw[-] (12,0) -- (13,0);
\draw[-] (12.5,0.5) -- (13,0);
\draw[-] (12,0) -- (12.5,0.5);
\node (a9) at (12.5,1) {$\bullet$};
\draw[-] (12,0) -- (12.5,1);
\draw[-] (13,0) -- (12.5,1);
\draw[-] (12.5,0.5) -- (12.5,1);

	\node (a9) at (0,0) {$\bullet$};
\node (a10) at (1,0) {$\bullet$};
\draw[-] (0,0) -- (1,0);
\node (a9) at (0,0.5) {$\bullet$};
\node (a10) at (1,0.5) {$\bullet$};
\draw[-] (0,0.5) -- (1,0.5);
\node (a9) at (0,1) {$\bullet$};
\node (a10) at (1,1) {$\bullet$};
\draw[-] (0,1) -- (1,1);
\node (a9) at (0,0.25) {$\bullet$};
\node (a10) at (1,0.25) {$\bullet$};
\draw[-] (0,0.25) -- (1,0.25);
\node (a9) at (0,0.75) {$\bullet$};
\node (a10) at (1,0.75) {$\bullet$};
\draw[-] (0,0.75) -- (1,0.75);

\node (a1) at (1,2) {$\bullet$};
\node (a2) at (2,2) {$\bullet$};
\draw[-] (1,2) -- (2,2);
\node (a21) at (1.5,2.5) {$\bullet$};
\draw[-] (1.5,2.5) -- (1,2);
\draw[-] (1.5,2.5) -- (2,2);
\node (a3) at (3,2) {$\bullet$};
\node (a4) at (4,2) {$\bullet$};
\draw[-] (3,2) -- (4,2);
\node (a21) at (3.5,2.5) {$\bullet$};
\draw[-] (3.5,2.5) -- (3,2);
\draw[-] (3.5,2.5) -- (4,2);
\node (a5) at (5,2) {$\bullet$};
\node (a6) at (6,2) {$\bullet$};
\draw[-] (5,2) -- (6,2);
\node (a21) at (5.5,2.5) {$\bullet$};
\draw[-] (5.5,2.5) -- (5,2);
\draw[-] (5.5,2.5) -- (6,2);
\node (a7) at (7,2) {$\bullet$};
\node (a8) at (8,2) {$\bullet$};
\draw[-] (7,2) -- (8,2);
\node (a21) at (7.5,2.5) {$\bullet$};
\draw[-] (7.5,2.5) -- (7,2);
\draw[-] (7.5,2.5) -- (8,2);
\node (a9) at (9,2) {$\bullet$};
\node (a10) at (10,2) {$\bullet$};
\node (a11) at (9.5,2.5) {$\bullet$};
\draw[-] (9,2) -- (10,2);
\draw[-] (9.5,2.5) -- (10,2);
\draw[-] (9,2) -- (9.5,2.5);

	\node (a9) at (11,2) {$\bullet$};
\node (a10) at (12,2) {$\bullet$};
\draw[-] (11,2) -- (12,2);
\node (a9) at (11,2.5) {$\bullet$};
\node (a10) at (12,2.5) {$\bullet$};
\draw[-] (11,2.5) -- (12,2.5);
\node (a9) at (11,3) {$\bullet$};
\node (a10) at (12,3) {$\bullet$};
\draw[-] (11,3) -- (12,3); 
\node (a9) at (11,2.25) {$\bullet$};
\node (a10) at (12,2.25) {$\bullet$};
\draw[-] (11,2.25) -- (12,2.25);
\node (a9) at (11,2.75) {$\bullet$};
\node (a10) at (12,2.75) {$\bullet$};
\draw[-] (11,2.75) -- (12,2.75);

\end{tikzpicture}	
\end{figure}

\section{Dihedral groups and star-freeness}\label{d2n}
In this section, we give an example of a group, namely, dihedral group $D_{2n}$ of order $2n$ and its star-freeness.
Every finite non-abelian group $G$ belongs to $\mathcal{SF}(k)$ for some $k$.
We have $\mathcal{SF}(1) \subseteq \cdots \subseteq \mathcal{SF}(k) \subseteq \cdots$.
We show that all these inclusions are strict (Propositions \ref{prop1} and \ref{prop2}).
Define $\mathcal{SF}$ to be the limit of the above inclusions. Clearly, $\mathcal{SF}$ is equal to the set of all non-abelian groups. 
\begin{definition}(\textbf{Vertex degree type $(k)$})
	A group $G$ is said to be of \emph{vertex degree type ($k$)} if the commuting graph $\Gamma(G)$ has maximum vertex degree $<k$.
\end{definition}
If $G$ is a group of vertex degree type ($k$) then $G$ is vertex degree type ($k+1$). In view of this simple observation, we define the \textit{star number}, denoted by $\mathrm{S}(G)$, to be the smallest $k$ such that $G$ is of vertex degree type ($k$).	
\subsection{Trivial center case}
If $G$ has trivial center then we have, $G \in \mathcal{SF}(k)$ if and only if  $|C_G(x)|<(k+1)+1$ for all $x \in \mathcal{NC}(G)$. 
More precisely, the size of the largest non-central centralizer of $G$ should be strictly less than $k+2$.

In this case, $n$ is odd and the class equation of $D_{2n}$ is given by 
$$2n-1 = \Big(\frac{n-1}{2}\Big) \frac{2n}{n} + \Big(1\Big) \frac{2n}{2}.$$
So the size of the largest centralizer in $D_{2n}$ is $n$ and hence 
$D_{2n} \in \mathcal{SF}(k)$ if and only if $k > n - 2$.
We record this as:
\begin{proposition}\label{prop1}
	Assume $n$ is odd, then $D_{2n} \in \mathcal{SF}(k)$ for all $k \geq n-1$ and $S(D_{2n}) = n-1$.
\end{proposition}
\begin{remark}
	Let $k_1$ and $k_2$ be two consecutive odd numbers, then $D_{2k_2} \in \mathcal{SF}(k_1+1)$ and $D_{2k_2} \notin \mathcal{SF}(k_1)$.
\end{remark}
\subsection{Non-trivial center case}
If $G$ has non-trivial center then we have, $G \in \mathcal{SF}(k)$ if and only if $|C_G(x)|<(k+1)+|Z(G)|$ for all $x \in \mathcal{NC}(G)$. 
More precisely, the size of the largest non-central centralizer of $G$ should be strictly less than $k+1+|Z(G)|$.

In this case, $n$ is even and the class equation of $D_{2n}$ is given by 
$$2n-2 = \Big(\frac{n-2}{2}\Big) \frac{2n}{n} + \Big(2\Big) \frac{2n}{4}.$$
So the size of the largest centralizer in $D_{2n}$ is $n$ and $|Z(G)| = 2$. Hence 
$D_{2n} \in \mathcal{SF}(k)$ if and only if $k > n - 3$.
We record this as:
\begin{proposition}\label{prop2}
	Assume $n$ is even, then $D_{2n} \in \mathcal{SF}(k)$ for all $k \geq n-2$ and $S(D_{2n}) = n-2$.
\end{proposition}
\begin{remark}
	Let $k_1$ and $k_2$ be two consecutive even numbers, then $D_{2k_2} \in \mathcal{SF}(k_1)$ and $D_{2k_2} \notin \mathcal{SF}(k_1-1)$.
\end{remark}

\medskip
\noindent
\textbf{Acknowledgement:} The authors would like to thank the anonymous referee for carefully reading
this paper.

\end{document}